\documentclass[12pt,reqno]{amsart}
\usepackage{amssymb,amsmath,amsthm}
\usepackage{graphicx}
\usepackage{subcaption}
\usepackage{fullpage}
\usepackage{scrextend}
\usepackage{siunitx}
\usepackage{enumerate}
\usepackage{tikz,calc}
\usepackage{tikz-cd}
\usepackage{subcaption}
\usetikzlibrary{calc}
\usepackage{epsfig,graphicx}
\usepackage{listings}
\usepackage{enumerate}

\usepackage[hypertexnames=false]{hyperref}

\setlength{\unitlength}{1in}
 
 \newtheorem{theorem}{Theorem}
\newtheorem{prop}[theorem]{Proposition}
\newtheorem{lemma}[theorem]{Lemma}

\newtheorem{alg}[theorem]{Algorithm}

\begin{document}

\title{Grid-drawings of graphs in three-dimensions}

\author{J\'ozsef Balogh}
\address{Department of Mathematics \\ University of Illinois Urbana-Champaign \\ 1409 W. Green Street, Urbana IL 61801 \\ United States}
\email{jobal@illinois.edu}
\thanks{Balogh was supported in part by NSF grants DMS-1764123 and RTG DMS-1937241, FRG DMS-2152488, the Arnold O. Beckman Research Award (UIUC Campus Research Board RB 24012).}
\author{Ethan Patrick White}
 \address{Department of Mathematics \\ University of Illinois Urbana-Champaign \\ 1409 W. Green Street, Urbana IL 61801 \\ United States}
\email{epw@illinois.edu}
\thanks{White is supported in part by an NSERC Postdoctoral Fellowship}

\begin{abstract}
Using probabilistic methods, we obtain grid-drawings of graphs without crossings with low volume and small aspect ratio. We show that every $D$-degenerate graph on $n$ vertices can be drawn in $[m]^3$ where $m^3 = O(D^2 n\log n)$. In particular, every graph of bounded maximum degree can be drawn in a grid with volume $O(n \log n)$. 


\end{abstract}

\maketitle

\section{Introduction}

A \emph{grid-drawing} of a graph $G$ is a representation of $G$ where vertices are distinct points of $\mathbb{Z}^d$ and edges are straight-line segments such that 

\begin{enumerate}[(i)]
\item no edge intersects a vertex that it is not adjacent to in $G$;
\item no pair of edges have a common interior point.
\end{enumerate}


\noindent The study of three-dimensional grid-drawings is in part motivated by problems in circuit architecture~\cite{LR} and data visualization~\cite{WM}.\\

 For a positive integer $m$, let $[m]$ denote the set $\{0,1,\ldots,m-1\}$. In contrast to the planar setting, a construction using a moment curve shows that every $n$-vertex graph can be drawn in a $[n] \times [2n] \times [2n]$ grid, and the size of this grid is tight up to a multiplicative constant~\cite{CELR}. Grid-drawings have been studied with respect to many graph parameters, including maximum degree, genus, path-width, and chromatic number~\cite{DJMMUW,DMW,DWood04,DWood06,PTT}. Typically, we are interested in grid-drawings of minimum volume, and also small aspect ratio, i.e., the ratio between the longest and shortest sides of the grid. The degeneracy of a graph $G$ is the smallest positive integer $D$ such that every subgraph of $G$ has a vertex of degree at most $D$. A long-standing open problem of Pach, Thiele, and T\'oth~\cite{PTT} is to determine if all graphs of maximum degree $3$ have a grid-drawing with $O(n)$ volume. Our main result resolves this problem up to logarithmic factors.

\begin{theorem}\label{blowupThm} Let $G$ be a $D$-degenerate graph with $n$-vertices and $k \geq n$ edges. Then there is a grid-drawing of $G$ in $[m]^3$ where $m^3 = O(D k \log n )$. 
\end{theorem}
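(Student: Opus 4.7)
The plan is to use the probabilistic method. Set $m = \lceil (CDk \log n)^{1/3}\rceil$ for a sufficiently large absolute constant $C$, and place each vertex of $G$ at an independent uniformly random point of $[m]^3$; the goal is to show that with positive probability this placement is a valid grid-drawing. The two ways the resulting straight-line drawing can fail are: (V) a vertex $w$ lies in the open segment of an edge $uv$, and (C) two vertex-disjoint edges share an interior point. For uniform points of $[m]^3$, the per-configuration probabilities are $O(1/m^2)$ for type (V), because the line through two fixed points of $[m]^3$ meets the grid in $O(m)$ lattice points, and $O(1/m)$ for type (C), because four uniform points are coplanar with probability $O(1/m)$ (a plane meets $[m]^3$ in $O(m^2)$ lattice points) and a coplanar quadruple crosses with positive constant probability.

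A plain union bound produces expected violation counts of order $kn/m^2 + k^2/m$, both far too large for the target $m$, so the $D$-degeneracy of $G$ must be exploited. I would fix a degeneracy ordering $v_1,\dots,v_n$ with each $v_i$ having at most $D$ neighbours in $\{v_1,\dots,v_{i-1}\}$, orient each edge toward its earlier endpoint (so every out-degree is at most $D$), and charge each bad configuration to the largest-indexed vertex it involves. Revealing positions in the order $v_1, v_2, \dots, v_n$, let $B_i$ be the event that placing $v_i$ completes some bad configuration; then $B_i$ is a union of $O(Di)$ type-(V) witnesses and $O(D^2 i)$ type-(C) witnesses. Using the single-event estimates conditionally on $v_1,\dots,v_{i-1}$, one obtains $P(B_i) \le c_1 Dn/m^2 + c_2 D^2 i/m$; the target $m^3 = CDk\log n$ is calibrated so that this is at most $1/(2n)$, and a union bound over $i=1,\dots,n$ completes the argument. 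An LLL-based variant, with the dependencies of $B_i$ controlled by the oriented two-neighbourhood of $v_i$, should work with the same calibration.

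The main obstacle is the crossing contribution $O(D^2 i/m)$: for $i$ close to $n$ and $k$ close to $Dn$, the naive bound falls short of $1/n$ even with the $\log n$ slack, since paying $O(m^2)$ bad positions for each of the $\Omega(Dn)$ previously placed edges would exhaust the $m^3$ grid. Overcoming this will, I expect, require a finer geometric lemma: for a fixed back-neighbour $v_j$ of $v_i$, the ``forbidden'' wedges for the segment $v_iv_j$ all lie in planes through $v_j$, and an inclusion--exclusion over the $O(Di)$ such planes---using that their pairwise intersections are lines through $v_j$ and their triple intersections are single points---should collapse the aggregate count of bad lattice positions from $O(D^2 i \cdot m^2)$ down to $O(m^3/\log n)$. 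Pairing this refined crossing estimate with the degeneracy-based charging is, in my view, the decisive technical step.
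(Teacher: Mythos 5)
Your core setup---one independent uniform point per vertex, reveal in degeneracy order, and a union bound over the events $B_i$---cannot be pushed through, and the gap you flag at the end is not a technical refinement but the decisive missing idea. With the target calibration $m^3 = \Theta(Dk\log n)$, the expected number of crossing pairs in a single random placement is of order $k^2\log m/m^3 \approx k/D$, which for $k = \Theta(Dn)$ is $\Theta(n)$; so a typical sample has many conflicts, each $B_i$ has probability bounded below by a constant (not $O(1/n)$), and no choice of the constant $C$ fixes this. The fix you propose is also unsound as stated: the claim that inclusion--exclusion over the planes through $v_j$ collapses the forbidden set for $v_i$ to $O(m^3/\log n)$ is a deterministic statement about worst-case positions of the $O(Di)$ earlier edges, and it is false in general---adversarially placed edges can make the union of forbidden wedges cover a constant fraction of the grid. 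What is true (and what the calibration actually buys) is only an expectation statement: averaging over the random positions of the earlier edges, the counting lemmas (coplanar triples through a fixed point being $O(m^6\log m)$, collinear pairs through a point being $O(m^3)$) give an expected forbidden volume of order $Dk\log m = O(m^3)$ with a small constant. That still leaves a constant per-vertex failure probability, so the union bound over $n$ vertices fails. Your LLL remark does not rescue this either: the events $B_i$ have constant probability, and the pairwise-crossing events have dependency degree governed by the maximum degree, which a degeneracy hypothesis does not bound.

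The paper's proof supplies exactly the ingredient your scheme lacks: multiple candidate positions per vertex. It randomly embeds the $t$-blowup $G(t)$ with $t=\max\{\lceil\log n\rceil,\lceil D\log D\rceil\}$ into $[m]^3$ with $m^3\geq 1000C_1Dk\log n$, and then greedily selects, in reverse degeneracy order (so each new vertex has at most $D$ already-chosen neighbours), one representative from each part $V_i$ that creates no conflict. Failure at step $i$ means all $t$ candidates in $V_i$ are blocked, hence at least $t/2$ are blocked by edge-crossings or at least $t/2$ by vertex-on-edge incidences; a union bound over witness structures, using the two counting lemmas, bounds this by $10^{-t/2}\leq n^{-\Omega(1)}$, and summing over the $n$ steps gives success probability at least $1/2$. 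In short, the logarithmic factor in the volume is spent on giving each vertex $\Theta(\log n)$ independent chances, turning a constant per-vertex failure probability into a polynomially small one---without this (or some comparable resampling/extraction mechanism), your single-placement argument cannot reach the stated bound.
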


Bose, Czyzowicz, Morin, and Wood~\cite{BCMW} show that the maximum number of edges in a grid-drawing is bounded by eight times the volume of the grid, and therefore our bound is tight up to the logarithmic and degeneracy factors. The previous best bound on the volume of a grid-drawing of bounded degeneracy graphs was $O(n^{3/2})$. More precisely, it is known that there exists a grid-drawing of every $D$-degenerate graph with $n$ vertices and $k$ edges in grids of volume $O(D^{15/2}nk^{1/2})$ and $O(Dnk)$~\cite{DWood04,DWood06}. The grid-drawings behind these two earlier bounds have large aspect ratios; there is a side of the grid with length $\Omega(n)$. \\



Felsner, Liotta, and Wismath~\cite{FLW} posed the problem of determining if all planar graphs on $n$ vertices can be drawn in a 3-dimensional grid with $O(n)$ volume. All progress on this problem was made by constructing so-called \emph{track-layouts}. Recent progress includes a volume bound of $O(n \log^8 n)$ of Di Battista, Frati, and Pach~\cite{DFP}, and a bound of $O(n \log n)$ of Dujmovi\'c~\cite{D15}. A breakthrough of Dujmovi\'c  Joret,  Micek,  Morin,  Ueckerdt, Wood~\cite{DJMMUW} fully resolved this conjecture by showing there exists a drawing of every $n$-vertex planar graph in a grid with dimensions $O(1) \times O(1) \times O(n)$. In general, the track-layout method produces a grid-drawing with one very large dimension. In~\cite{CELR} the authors pose the problem of deciding if every 3-dimensional grid-drawing of an $n$-vertex planar graph requires all sides of the grid to be $\Omega(n)$. Since planar graphs are $5$-degenerate, Theorem~\ref{blowupThm} shows that every planar graph can be drawn in a grid with all sides $O(n^{1/3}\log^{1/3} n)$, thereby resolving this problem in a strong way. \\


Our method is to use random embeddings of graphs in grids. To determine the probability that our embeddings do not have edge crossings we require estimates on the number of point-tuples on affine subspaces in grids. In Section~\ref{tupleSec} we prove the necessary lemmata about point-tuples. In Section~\ref{drawingSec} we verify our probabilistic constructions. In Section~\ref{concSec} we discuss continuations of this work and some other open problems. 

\section{Counting tuples on affine subspaces}\label{tupleSec}

Edges that pass through other vertices are forbidden in our drawings. As a result, we require an estimate on collinear triples. Versions of the following lemma have appeared in the literature, see for example~\cite{Z}.

\begin{lemma}\label{collinearCount}
Let $d,k,m \geq 2$ be positive integers and $C_{d,k,m}$ denote the number of collinear $k$-tuples in $[m]^d$. Then 
\[ C_{d,k,m} = \begin{cases} O(m^{d+k-1}) & \text{ if } k \geq d+ 2 \\ O(m^{d+k-1} \log m ) & \text{ if } k = d+1 \\ O(m^{2d}) & \text{ if } d \geq k, \end{cases}\]
where the implicit constants may depend on $d$ and $k$ but not $m$. 
\end{lemma}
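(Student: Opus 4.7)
The plan is to classify lines in $\mathbb{R}^d$ by their primitive direction vector. Every line that contains at least two integer points has direction $\pm v$ for a unique primitive $v \in \mathbb{Z}^d$, i.e., one with $\gcd(v_1,\ldots,v_d)=1$. Two key estimates drive the count. First, if $\|v\|_\infty = s$, then every line in direction $v$ meets $[m]^d$ in at most $\lceil m/s \rceil$ points, since looking at a coordinate $i$ with $|v_i| = s$ forces the parameter $t$ in the parametrization $p + t v$ into a range of length at most $m/s$. Second, for fixed primitive $v$, the lines in direction $v$ partition $\mathbb{Z}^d$, so writing $\ell_L = |L\cap [m]^d|$ we have $\sum_L \ell_L = m^d$, where the sum runs over lines $L$ in direction $v$ that meet the grid.

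With these in hand, I would bound the collinear $k$-tuples by
$$C_{d,k,m} \lesssim \sum_v \sum_{L \text{ in direction } v} \ell_L^k \leq \sum_v \Big\lceil \tfrac{m}{s(v)}\Big\rceil^{k-1} \sum_L \ell_L \leq \sum_v \Big\lceil \tfrac{m}{s(v)}\Big\rceil^{k-1} m^d,$$
where $s(v)=\|v\|_\infty$. Bucketing primitive directions by $s$ — there are $O(s^{d-1})$ primitive $v \in \mathbb{Z}^d$ with $\|v\|_\infty = s$, since this is bounded by $(2s+1)^d - (2s-1)^d$ — this becomes
$$C_{d,k,m} \lesssim m^{d+k-1}\sum_{s=1}^{m-1} s^{d-k}.$$
The three cases of the lemma then fall out from the three regimes of this elementary sum: it is $O(1)$ when $k \geq d+2$, $\Theta(\log m)$ when $k = d+1$, and $\Theta(m^{d-k+1})$ when $k \leq d$, the last regime promoting the prefactor $m^{d+k-1}$ to the claimed $m^{2d}$.

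The main obstacle is conceptual rather than technical: one has to resist the temptation to bound $\ell_L^k$ line by line, and instead apply the crude inequality $\sum \ell_L^k \leq (\max \ell_L)^{k-1} \sum \ell_L$, which is exactly what allows the two available bounds — maximum size per line and total size summed over lines — to be used together. A secondary point of care is the $v$ versus $-v$ ambiguity in specifying a direction, but this contributes only a harmless multiplicative factor of $2$ that is absorbed into the implicit constants.
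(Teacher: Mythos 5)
Your proof is correct and follows essentially the same route as the paper: both arguments bucket directions by $s=\|v\|_\infty$, use that there are $O(s^{d-1})$ such directions and at most $\lceil m/s\rceil$ grid points per line, and arrive at the same sum $m^{d+k-1}\sum_{s} s^{d-k}$ with the same three regimes. The only cosmetic difference is that the paper obtains the factor $m^d$ by translating one point of the tuple to the origin, while you obtain it from $\sum_L \ell_L = m^d$ over the lines in a fixed direction combined with the bound $\sum_L \ell_L^k \leq (\max_L \ell_L)^{k-1}\sum_L \ell_L$.
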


\begin{proof} By the symmetry of $[m]^d$, it is sufficient to count collinear tuples on a line through the origin and then multiply by $m^d$. Let $v \in \mathbb{Z}^d$ be the direction of a line intersecting at least $k$ points of $[m]^d$. Note that $\| v\|_\infty \leq m$. For each $1 \leq s \leq m$, the number of choices for $v$ such that $ \| v \|_{\infty} = s$ is at most $ds^{d-1}$. Since the number of points of $[m]^d$ on a line in direction $v$ is $\lceil m/\|v\|_{\infty} \rceil$, the number of collinear tuples on lines through the origin is at most 
\[ \sum_{s = 1}^m ds^{d-1} \left\lceil \frac{m}{s} \right\rceil^{k-1}  \leq d(2m)^{k-1} \sum_{s=1}^m s^{d-k} .\]
After multiplying by $m^d$, the claimed bounds follow.

\end{proof}

If two edges cross in a three-dimensional graph drawing, then these edges lie in a common plane. For this reason we need an estimate on the number of coplanar four-tuples in grids. In the setting of arbitrary $N$ point sets with at most $O(N^{1/2})$ coplanar points, Cardinal, T\'oth, and Wood obtain a bound on the number of coplanar four-tuples in $\mathbb{R}^d$ that matches the bound given by our Proposition~\ref{3dim4tup} for $d = 3$~\cite[Lemma 4.5]{CTW}. A recent result of Suk and Zeng improves the upper bound for the number of points in general position in $d$-dimensional integer grids~\cite[Theorem 1.3]{SZ}.




\begin{lemma}\label{hyperplaneBound} 
Let $d \geq 2$ be a positive integer, and $a_1,\ldots,a_d \in \mathbb{Z}$ not be all zero with greatest common denominator 1. Let 
\[ \mathcal{L} = \{ (x_1,\ldots,x_d) \in \mathbb{Z}^d \colon \sum_{i=1}^d a_ix_i = 0 \}.\]
Let $s = \max_i\{|a_i|\}$, and $m \geq s$ be a positive integer. If $\mathcal{L} \cap [m]^d$ spans a $(d-1)$-dimensional subspace, then $|\mathcal{L} \cap [m]^d| \leq 3^dm^{d-1}/s$. 

\end{lemma}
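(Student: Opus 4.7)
The plan is to reduce the hyperplane count to a congruence-sublattice count in one lower dimension, and then prove a general bound on such counts by induction. After reordering coordinates (and possibly negating all $a_i$) I may assume $a_1 = s$. The projection $\pi(x_1,\dots,x_d)=(x_2,\dots,x_d)$ restricts to an injection from $\mathcal{L} \cap \mathbb{Z}^d$ into the sublattice $\Lambda:=\{y \in \mathbb{Z}^{d-1} : s \mid a_2 y_2 + \cdots + a_d y_d\}$, because once $(y_2,\dots,y_d)$ is fixed the equation $\sum a_i x_i = 0$ forces $x_1$ uniquely. Hence $|\mathcal{L} \cap [m]^d| \le |\Lambda \cap [m]^{d-1}|$, and the assumption $\gcd(a_1,\dots,a_d)=1$ together with $a_1=s$ gives $\gcd(a_2,\dots,a_d,s)=1$.

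I would then prove by induction on $n$ the following auxiliary statement: for any $t\ge 1$, $c\in\mathbb{Z}$, any $(a_1,\dots,a_n)\in\mathbb{Z}^n$ with $\gcd(a_1,\dots,a_n,t)=1$, and any $m\ge t$, the number of $y\in[m]^n$ with $\sum_i a_i y_i \equiv c \pmod{t}$ is at most $3^n m^n/t$. The base case $n=1$ uses $\gcd(a_1,t)=1$: the solution $y$ is unique modulo $t$, giving at most $\lceil m/t\rceil\le 2m/t$ values. For the inductive step, fix $y_n\in[m]$ and set $g=\gcd(a_1,\dots,a_{n-1},t)$. The congruence in $(y_1,\dots,y_{n-1})$ is solvable only when $g\mid c - a_n y_n$, in which case dividing through by $g$ yields a congruence modulo $t/g$ whose coefficient-modulus gcd equals $1$, and the inductive hypothesis bounds the inner count by $3^{n-1}m^{n-1}g/t$. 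The crucial observation is that $\gcd(a_n,g)=\gcd(a_1,\dots,a_n,t)=1$, so $a_n y_n$ hits every residue modulo $g$ as $y_n$ varies, and therefore at most $\lceil m/g\rceil \le m/g+1$ values of $y_n \in [m]$ are valid. Multiplying and using $g\le t \le m$ gives
\[
\left(\frac{m}{g}+1\right)\cdot \frac{3^{n-1}m^{n-1}g}{t} \;\le\; \frac{2\cdot 3^{n-1}m^n}{t} \;\le\; \frac{3^n m^n}{t}.
\]

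Applying the auxiliary claim with $n=d-1$, $t=s$, $c=0$ yields $|\Lambda\cap[m]^{d-1}|\le 3^{d-1}m^{d-1}/s\le 3^d m^{d-1}/s$, as required. The step I expect to be the main obstacle is identifying the right inductive statement: naively peeling a single coordinate generically loses a factor of $\gcd(a_n,t)$ per step, which could be as large as $t$ and would ruin the bound. It is the global coprimality $\gcd(a_1,\dots,a_n,t)=1$ that forces $\gcd(a_n,g)=1$ at every stage regardless of the order of peeling, and this is precisely what allows the recursion to close with only a factor of $3$ loss per dimension.
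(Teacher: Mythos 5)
Your proof is correct, and it takes a genuinely different route from the paper. You eliminate the largest-coefficient variable and reduce the problem to counting solutions of the linear congruence $a_2y_2+\cdots+a_dy_d\equiv 0 \pmod{s}$ in $[m]^{d-1}$, which you then handle by an elementary induction on the number of variables, peeling one coordinate at a time and tracking the gcd $g$ of the remaining coefficients with the modulus; the key point, as you note, is that the global coprimality forces $\gcd(a_n,g)=1$, so the peeled variable is constrained to a single residue class modulo $g$ and the losses telescope to a factor of at most $3$ per dimension. The paper instead argues geometrically: it chooses a basis $v_1,\ldots,v_{d-1}$ of $\mathcal{L}$ with entries bounded by $m$ (this is where the hypothesis that $\mathcal{L}\cap[m]^d$ spans a $(d-1)$-dimensional subspace is used), projects away a coordinate with $a_d\neq 0$, covers the projected lattice points by translates of the fundamental parallelepiped inside $[-m,2m]^{d-1}$, and identifies the parallelepiped's volume with $|b_d|\geq|a_d|$ via a determinant identity. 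Your argument buys something: it is purely arithmetic, needs no choice of a short lattice basis, and in particular does not use the spanning hypothesis at all (you prove the bound unconditionally, even with the slightly better constant $3^{d-1}$), whereas the paper's argument is more geometric and generalizes naturally to statements about covolumes of sublattices. Both yield the bound $3^dm^{d-1}/s$ needed downstream, so your proof is a valid substitute.
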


\begin{proof} Since $\mathcal{L}$ is a $(d-1)$-dimensional subspace we can find linearly independent vectors $v_1,\ldots,v_{d-1} \in \mathbb{Z}^d$ such that 
\[ \mathcal{L}  = \text{span}_\mathbb{Z} \{ v_1,\ldots,v_{d-1}\}.  \] 
Moreover, since $\mathcal{L} \cap [m]^d$ spans a $(d-1)$-dimensional subspace the $v_i$ can be chosen such that $\|v_i\|_\infty \leq m$ for all $1 \leq i \leq d-1$. Let $b \in \mathbb{R}^d$ be the unique vector such that for all $x \in \mathbb{R}^d$:
\begin{equation}\label{bDef}
\langle b,x \rangle = \det (v_1,\ldots,v_{d-1},x). 
\end{equation}
It is straightforward to check that $b$ exists and is unique by taking $x$ to be standard basis vectors. Since $b$ is perpendicular to $\mathcal{L}$, we see that $a = (a_1,\ldots,a_d)$ and $b$ must be parallel. Moreover, since $b \in \mathbb{Z}^d$ and the greatest common denominator of the entries in $a$ is 1, we conclude that $|b_i| \geq |a_i|$ for all $1 \leq i \leq d$. Suppose that $a_d \neq 0$ and so the projection of $\mathcal{L}$ to the first $d-1$ coordinates, call it $\mathcal{L}_d$, is a bijection. For $1 \leq i \leq d-1$, let $v_i^{(d)} \in \mathbb{R}^{d-1}$ be the vector $v_i$ with $d^{th}$ coordinate deleted. Observe that 
\[ \mathcal{L}_d = \text{span}_\mathbb{Z} \{ v_1^{(d)},\ldots,v_{d-1}^{(d)}\}, \] 
and $|\mathcal{L} \cap [m]^d| = |\mathcal{L}_d \cap [m]^{d-1}|$. The fundamental parallelepiped of $\mathcal{L}_d$ is the set 
\[ \mathcal{P}_d = \left\{ \sum_{i=1}^{d-1} x_i v_i^{(d)}  \colon 0 \leq x_i < 1 \right\}.\]
Let 
\[ \mathcal{T}_d = \bigcup_{\substack{t \in \mathcal{L}_d \\ (\mathcal{P}_d + t) \cap \mathcal{L}_d \cap [m]^{d-1} \neq \emptyset }} (\mathcal{P}_d + t ) ,\]

\noindent be the union of all integer vector translates of $\mathcal{P}_d$ that have nonempty intersection with $\mathcal{L}_d \cap [m]^{d-1}$. Since all entries of all vectors in $\{v_i\}_{1 \leq i \leq d-1}$ at most $m$ in absolute value, we conclude that $\mathcal{T}_d \subseteq [-m,2m]^{d-1}$. Hence the number of translates of $\mathcal{P}_d$ comprising $\mathcal{T}_d$ is at most $(3m)^{d-1}/|\mathcal{P}_d|$, and consequently $(3m)^{d-1}/|\mathcal{P}_d|$ is also an upper bound for $|\mathcal{L}_d \cap [m]^{d-1}|$. Note that from~\eqref{bDef} by taking $x$ to be the $d^{th}$ elementary basis vector, we see $|b_d| = \det(v_1^{(d)}, \ldots, v_{d}^{(d)}) = |\mathcal{P}_d|$. From $|a_d| \leq |b_d|$ we conclude $|\mathcal{L} \cap [m]^{d}| \leq 3^d m^{d-1}/|a_d|$. Moreover, as this argument can repeated for any non-zero coordinate of $a$, we conclude that $|\mathcal{L} \cap [m]^{d}| \leq 3^d m^{d-1}/s$.  

\end{proof}


\begin{prop}\label{3dim4tup}
Let $m$ be a positive integer, the number of 3-tuples in $[m]^3$ that are coplanar with the origin is $O(m^6 \log m)$. Moreover, the number of coplanar 4-tuples in $[m]^3$ is $O(m^9\log m)$.

\end{prop}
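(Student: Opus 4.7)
The plan is to count 3-tuples coplanar with the origin first, and then bootstrap to all coplanar 4-tuples by translation. Every plane through the origin has a primitive integer normal $a \in \mathbb{Z}^3 \setminus \{0\}$, unique up to sign, so I would parametrize the sum by such $a$.

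First I would dispose of the degenerate subcase where the three points lie on a single line through the origin. Summing $\lceil m/\|v\|_\infty\rceil^3$ over primitive direction vectors $v$, as in the proof of Lemma~\ref{collinearCount}, gives $O(m^3 \log m)$ such tuples, which is negligible. For the generic subcase, a non-collinear 3-tuple coplanar with the origin lies on a unique plane $\mathcal{L}_a = \{x : \langle a,x\rangle = 0\}$, and $\mathcal{L}_a \cap [m]^3$ necessarily spans a 2-dimensional subspace, so Lemma~\ref{hyperplaneBound} yields $|\mathcal{L}_a \cap [m]^3| \leq 27 m^2/s$ with $s = \|a\|_\infty$. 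Since $a$ must be parallel to the cross product $v_1 \times v_2$ of any two linearly independent vectors in $\mathcal{L}_a \cap [m]^3$, one has $s \leq 2m^2$, and the number of primitive $a$ with $\|a\|_\infty = s$ is $O(s^2)$. The total count of such 3-tuples is therefore at most
\[ \sum_{s=1}^{2m^2} O(s^2) \cdot \left(\frac{27\,m^2}{s}\right)^3 \;=\; O(m^6)\sum_{s=1}^{2m^2}\frac{1}{s} \;=\; O(m^6\log m). \]

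For the 4-tuple bound I would fix an anchor $p_0 \in [m]^3$ and observe that $(p_0, p_1, p_2, p_3)$ is coplanar if and only if $(p_1 - p_0, p_2 - p_0, p_3 - p_0)$ is coplanar with the origin. The translated points lie in $[-(m-1), m-1]^3$, which can be covered by eight reflected copies of $[m-1]^3$; applying Lemma~\ref{hyperplaneBound} to each reflected copy (the normal $a$ changes signs, but $\|a\|_\infty$ is preserved) gives $|\mathcal{L}_a \cap [-(m-1), m-1]^3| = O(m^2/s)$, and the first-half argument then produces $O(m^6 \log m)$ valid 3-tuples for each fixed $p_0$. Multiplying by the $m^3$ choices of $p_0$ yields the claimed $O(m^9 \log m)$.

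The main obstacle is that Lemma~\ref{hyperplaneBound} is stated under the hypothesis $m \geq s$, while the cross-product bound only guarantees $s \leq 2m^2$. I would resolve this either by observing that the proof of Lemma~\ref{hyperplaneBound} in fact only uses the existence of two short independent vectors in $\mathcal{L}_a \cap [m]^3$, which is automatic whenever the intersection spans a 2-dimensional subspace, or by splitting the sum at $s = m$; in the tail regime $m < s \leq 2m^2$ the bound $27 m^2/s$ continues to apply for the same structural reason, and the harmonic sum $\sum_{s=m}^{2m^2} 1/s = O(\log m)$ keeps the total at $O(m^6 \log m)$. The remaining contributions (collinear tuples, translated-box corrections, the $p_0$-sum) are pure accounting of strictly smaller order.
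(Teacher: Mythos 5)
Your proposal is correct and follows essentially the same route as the paper: parametrize planes through the origin by integer normal vectors, bound points per plane via Lemma~\ref{hyperplaneBound}, sum over $\|a\|_\infty$ using the $O(s^2)$ count of normals, handle the collinear degenerate case by Lemma~\ref{collinearCount}, and lift to coplanar 4-tuples by translating an anchor point to the origin (the paper compresses this last step into ``by symmetry of $[m]^3$''). Your explicit handling of the $m \geq s$ hypothesis in Lemma~\ref{hyperplaneBound} is a welcome extra care that the paper's proof passes over silently, and your resolution (the proof only needs two short independent vectors in the intersection) is the right one.
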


\begin{proof} The claimed bound on the number of coplanar $4$-tuples follows from the bound on $3$-tuples on a common plane through the origin by symmetry of $[m]^3$. We may restrict our attention to the case when the $3$-tuple spans a $2$-dimensional subspace, since the number of collinear $3$-tuples through the origin is $O(m^3 \log m)$ by Lemma~\ref{collinearCount}. Let $a \in \mathbb{Z}^3$ be the normal of a plane through the origin intersecting at least three other points in $[m]^3$. By Lemma~\ref{hyperplaneBound} the number of points on such a plane is at most $27m^2/\|a \|_\infty$. Since the vector $a$ can be determined as the cross-product of two integer vectors in $[m]^3$, we know that $\|a\|_{\infty} \leq m^2$. For $1 \leq s \leq m^2$, the number of choices of such normal vector $a$ with $\|a\|_{\infty} = s$ is at most $3s^2$. Putting these estimates together, the number of $3$-tuples on a plane through the origin is bounded above by
\[ \sum_{s = 1}^{m^2} \left( \frac{27m^2}{s} \right)^3 \cdot 3s^2 = O(m^6\log m).  \]

\end{proof}

\section{Probabilistic grid-drawings}\label{drawingSec}

All of our constructions use randomness. For an $n$-vertex graph $G$ and grid $[m]^3$ we define a \emph{random embedding} to be a uniformly sampled embedding of the vertices of $G$ into $[m]^3$ out of all $\binom{m^3}{n}n!$ possible embeddings. Throughout this section we will use $\hat{G}$ to denote a random embedding of $G$. Given a random embedding $\hat{G}$ in $[m]^3$, we say an edge is in a \emph{conflict} if either (i) another edge intersects its interior; or (ii) it contains a vertex in its interior. If no edge has a conflict in an embedding, then $\hat{G}$ is a grid-drawing. Our next result is a toy-version of our main theorem. It uses the tools from Section~\ref{tupleSec} in a similar way to the proof of Theorem~\ref{blowupThm}.

\begin{theorem}\label{firstMoment3D} There exists an absolute constant $C$ such that the following holds. For every graph $G$ on $n$ vertices with $k$ edges, if $m$ is a positive integer such that $m \geq C((nk)^{1/3} + k^{2/3} (\log k)^{1/3})$, then there is a grid-drawing of $G_n$ in $[m]^3$.

\end{theorem}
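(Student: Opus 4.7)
The plan is to apply the first moment method to the uniform random embedding $\hat{G}$. Following the notation of Section~\ref{drawingSec}, I will track the two conflict types separately. Let $X_1$ be the number of pairs $(uv,w)$ with $uv \in E(G)$, $w \in V(G)\setminus\{u,v\}$, and $\hat{w}$ lying in the open segment $\hat{u}\hat{v}$. Let $X_2$ be the number of unordered pairs $\{e_1,e_2\}$ of edges of $G$ with four distinct endpoints whose image segments share an interior point. Every conflict in $\hat{G}$ is registered by $X_1$ or $X_2$: type (ii) conflicts contribute directly to $X_1$; type (i) conflicts between edges with disjoint endpoints contribute to $X_2$; and a type (i) conflict between edges sharing an endpoint forces the three distinct endpoints to be collinear with one strictly between the other two, which contributes to $X_1$.

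For $\mathbb{E}[X_1]$, observe that for any distinct vertices $u,v,w$ the triple $(\hat{u},\hat{v},\hat{w})$ is uniformly distributed on ordered triples of distinct points of $[m]^3$. Lemma~\ref{collinearCount} applied with $d=k=3$ supplies $O(m^6)$ collinear such triples, so the probability of collinearity is $O(1/m^3)$. Summing over the $k(n-2)$ triples of the form $(uv,w)$ yields
\[
\mathbb{E}[X_1] = O(kn/m^3).
\]

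For $\mathbb{E}[X_2]$, two disjoint closed segments in $\mathbb{R}^3$ sharing an interior point necessarily have coplanar endpoints. Proposition~\ref{3dim4tup} provides $O(m^9 \log m)$ coplanar $4$-tuples in $[m]^3$, so a fixed pair of disjoint edges has coplanar images with probability $O(\log m / m^3)$. Summing over the at most $\binom{k}{2}$ such pairs gives
\[
\mathbb{E}[X_2] = O(k^2 \log m / m^3).
\]

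It remains to choose $C$ large enough that $\mathbb{E}[X_1 + X_2] < 1$; Markov's inequality then produces an embedding with no conflicts, which is a grid-drawing. Because a drawing in $[m]^3$ upgrades to a drawing in $[m']^3$ for any $m' \geq m$, we may assume $m$ is within a constant factor of the hypothesized lower bound. A brief case split on whether $(nk)^{1/3}$ or $k^{2/3}(\log k)^{1/3}$ dominates shows that the hypothesis gives $m^3 \geq C^3(nk + k^2\log k)$ with $\log m = O(\log k + \log(n/k)^{+})$, and in each regime the factor $k^2 \log m / m^3$ collapses to $O(1/C^3)$; the details are routine. The main technical point — really the only conceptual ingredient beyond the lemmata of Section~\ref{tupleSec} — is the reduction of the shared-endpoint sub-case of a type (i) conflict to a type (ii) conflict tracked by $X_1$, after which the two first-moment bounds fall out immediately.
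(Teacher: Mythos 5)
Your proposal is correct and takes essentially the same route as the paper: a uniformly random embedding plus a first-moment count, bounding vertex-in-edge conflicts by collinear triples (Lemma~\ref{collinearCount}) and edge crossings by coplanar $4$-tuples (Proposition~\ref{3dim4tup}), then choosing $C$ so the total expectation drops below $1$. Your explicit reduction of crossings between edges sharing an endpoint to the collinear-triple count (and the monotonicity remark handling $\log m$ versus $\log k$) only spells out details the paper leaves implicit; the argument is otherwise identical.
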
 

\begin{proof} Let $G$ be an $n$-vertex graph with $k$ edges, $m > n^{1/3}$ be a positive integer to be determined later, and $\hat{G}$ be a random grid-drawing of $G$ in $[m]^3$. We estimate the expected number of edges of $\hat{G}$ in a conflict. Let $e,
u$ be an edge and vertex in $G$ where $u$ is not incident to $e$. Fix a triple of collinear points $x,y,z \in [m]^3$ where $y$ is between $x,z$. The probability that in $\hat{G}$ the endpoints of $e$ map to $x,z$ and $u$ maps to $y$ is 
\[ \frac{2(m^3-3)(m^3-4)\cdots (m^3 - n+1)}{m^3(m^3-1)\cdots(m^3-n+1)} = O(m^{-9}). \]
By Lemma~\ref{collinearCount}, the number of collinear triples in $[m]^3$ is $O(m^6)$. Since the number of edge-vertex pairs in $G$ is $nk$ we conclude that the expected number of edge-vertex pairs in $\hat{G}$ such that the vertex is in the interior of the edge is $O(nkm^{-3})$. Similarly, the probability that a particular pair of edges in $G$ are randomly assigned to a particular collinear $4$-tuple in $[m]^3$ is 
\[ \frac{24(m^3-4)(m^3-5)\cdots(m^3-n+1)}{m^3(m^3-1)\cdots(m^3-n+1)} = O(m^{-12}).\]
By Proposition~\ref{3dim4tup}, there are $O(m^9\log m)$ collinear 4-tuples in $[m]^3$. Since the number of edge pairs is $O(k^2)$ we see that the expected number of edges in conflict is 
\[ O\left( m^{-3} \cdot nk + m^{-3} \log m \cdot k^2 \right).\]
It follows that there exists a constant $C$ such that if $m \geq C((nk)^{1/3} + k^{2/3} (\log k)^{1/3})$, then the expected number of edges in a conflict is less than $1$, and so there is a grid-drawing of $G$ in $[m]^3$. 

\end{proof}

Since the number of edges in a planar graph is at most $3n$, Theorem~\ref{firstMoment3D} shows that every planar graph can be drawn in a grid with all sides $O(n^{2/3} (\log n)^{2/3})$, thereby answering Question 2 of Cohen, Eades, Lin, and Ruskey~\cite{CELR}. To prove our main theorem, we begin with a random embedding of the blow-up of a graph, and then greedily find a copy of our graph without any edge conflicts. \\

Let $G$ be an $n$-vertex graph with vertices $v_1, \ldots, v_n$. For a positive integer $t$, the $t$-blowup $G(t)$ of $G$ has $nt$ vertices, where the vertices are partitioned into $n$ sets $V_1,\ldots, V_n$ each of size $t$. If $v_iv_j$ is an edge in $G$ then every pair of vertices in $V_i,V_j$ is an edge. For a subset of vertices $S$ in $G$ we denote by $G[S]$  the subgraph of $G$ induced by $S$. If $G$ is a $D$-degenerate graph then there exists an ordering of the vertices $v_1,\ldots,v_n$ of $G$ such that the degree of $v_i$ in $G[v_i,\ldots,v_n]$ is at most $D$. For the rest of this section, the vertices of $D$-degenerate graphs will always be labeled in this way. This ordering is called a \emph{degeneracy ordering}. \\


By Proposition~\ref{3dim4tup} there is a constant $C_1$ such that for any point $v \in [m]^3$ the number of triples coplanar to $v$ is at most $C_1 m^6 \log m$. By Lemma~\ref{collinearCount}, there is a constant $C_2$ such that for any point $v \in [m]^3$ the number of pairs of points collinear to $v$ is at most $C_2 m^3$. We will use the constants $C_1,C_2$ below. The main idea behind the proof of Theorem~\ref{blowupThm} can be described by the following randomized algorithm.

\setcounter{theorem}{0}
\begin{alg}\rm\label{boundedDegAlg}~\\
\vspace{-0.5cm}
\begin{enumerate}[1:]
    \item \textbf{Input:} $n$-vertex graph $G$ with degeneracy $D$ and $k \geq n$ edges.
    \item Put $t =\max\{ \lceil  \log n\rceil , \lceil D \log D \rceil\}$. Find minimum $m \in \mathbb{N}$ such that 
    \[ m^3 \geq 1000 C_1D k \log n .\]


    
    \item Construct $\hat{G}(t)$, a random embedding of the $t$-blowup $G(t)$ in $[m]^3$. Let $V_1,\ldots,V_n$ be the sets of $t$ vertices corresponding to a vertex in $G$, where these sets are labelled according to a degeneracy ordering for $G$.

    \item Initialize $H = \emptyset$. For $i = n,n-1 \ldots, 1$: 
    \begin{itemize}
        \item Select an embedded vertex in $\hat{v}_i \in V_i$ and add it to $H$. Vertex $\hat{v}_i$ must be such that none of the edges in $\hat{G}(t)[H]$ is in a conflict in $\hat{G}(t)[H]$. 
    \end{itemize}
    \item \textbf{Output:} $H$. 
\end{enumerate}
\end{alg}

Note that if Algorithm~\ref{boundedDegAlg} successfully terminates, then the output $H$ will be a grid-drawing of $G$ in $[m]^3$ meeting the criterion of Theorem~\ref{blowupThm}.


\begin{proof}[Proof of Theorem~\ref{blowupThm}]



Let $m, t$ be as in Step 2 of Algorithm~\ref{boundedDegAlg}. Note that since $k \geq n$, we have $m^3> nt$ and so it is possible to embed $G(t)$ in $[m]^3$. Let $\hat{G}(t)$ be a random embedding of $G(t)$ in $[m]^3$. Consider the event that Step 4 cannot be completed for an iteration $1 \leq i \leq n$. Let $\{v_j\}_{j \in S}$ be the neighbors of $v_i$ in $G[v_{i+1},\ldots,v_n]$, and let $\{\hat{v}_j\}_{j \in S}$ be the previously selected embedded vertices in parts $V_j$ for every $j \in S$. Note that $|S| \leq D$. Let $T_j \subseteq V_i$ be the set of vertices such that if $\hat{w} \in T_j$, the edge $\hat{w}\hat{v}_j$ is in a conflict in $\hat{G}(t)$. The fact that iteration $i$ of Step 4 cannot be completed is equivalent to $\cup_{j \in S} T_j = V_i$. Repeatedly remove vertices from a set $T_j$ if they appear in a $T_{j'}$ with $j'<j$ so that the previous union is a partition. For every $j \in S$ let $T_j^1 \subseteq T_j$ be the set of vertices in $T_j$ that form an edge-crossing conflict with $\hat{v}_j$ and $T_j^2 \subseteq T_j\setminus T_j^1$ be the set of vertices in $T_j$ that form a vertex-interior conflict with $\hat{v}_j$.  \\


\emph{Case(i):} $N_1 = \sum_{j \in S} |T_j^1| \geq t/2$. For every $j \in S$, fix arbitrary tuples of edges in $G(t)$, indexed by the vertices of $T_j^1$: $\{e_v^j\}_{v \in T_j^1}$. By Proposition~\ref{3dim4tup}, the number of ways to embed $\cup_{j \in S} \{e_v^j\}_{v \in T_j^1}$ and $V_i$ into $[m]^3$ such that $e_v^j$ crosses edge $v\hat{v}_j$ for every $j \in S$ and $v \in T_j^1$ is at most $(C_1 m^6 \log m)^{N_1}$. The number of ways the sets $T_j^1$ can be chosen is bounded above by $D^{N_1}$. The number of ways $\{\hat{v}_j\}_{j \in S}$ can be chosen in Algorithm~\ref{boundedDegAlg} is bounded above by $t^D$. The number of ways all tuples $\cup_{j \in S} \{e_v^j\}_{v \in T_j^1}$ can be chosen is at most $k^{N_1}$, since these
edges are necessarily spanned by the vertices $\{v_j\}_{j \in S}$. The probability that Step 4 cannot be completed for iteration $i$ in Case(i) is therefore at most \\
\begin{equation}\label{Casei} \frac{D^{N_1} t^D (C_1 m^6 \log m)^{N_1}k^{N_1}}{m^3(m^3 - 1) \cdots (m^3 -3N_1 + 1 )} \leq \frac{t^D(C_1D)^{N_1}(\log^{N_1}m)k^{N_1}m^{-3N_1}}{(1-3N_1m^{-3})^{3N_1}} \leq 10^{-N_1}. \end{equation}
In the second inequality above we used that $(1- 3N_1m^{-3})^{3N_1} \geq 1 - 9N_1^2 m^{-3} \geq 1/2$, and $t^{D/N_1} \leq e^{2D\log t/t } \leq 4$.\\



\emph{Case(ii):} $N_2 = \sum_{j \in S} |T_j^2| \geq t/2$. For every $j \in S$, fix arbitrary tuples of vertices in $G(t)$, indexed by the vertices of $T_j^2$: $\{u_v^j\}_{v \in T_j^2}$. By Lemma~\ref{collinearCount}, the number of ways to embed $\cup_{j \in S} \{u_v^j\}_{v \in T_j^2}$ and $V_i$ into $[m]^3$ such that $u_v^j$ is inside edge $v\hat{v}_j$ for every $j \in S$ and $v \in T_j^2$ is at most $(C_2 m^3)^{N_2}$. The number of ways all tuples $\cup_{j \in S} \{u_v^j\}_{v \in T_j^2}$ can be chosen is at most $n^{N_2}$, since these vertices belong to $\{v_j\}_{j \in S}$. The probability that Step 4 cannot be completed for iteration $i$ in Case(ii) is therefore at most \\
\begin{equation}\label{Caseii} \frac{D^{N_2} t^D (C_2 m^3)^{N_2}n^{N_2}}{m^3(m^3 - 1) \cdots (m^3 -2N_1 + 1 )} \leq \frac{t^D(C_2D)^{N_2} n^{N_2}m^{-3N_2}}{(1- 2N_2m^{-3})^{2N_2}} \leq 10^{-N_2}.\end{equation}\\

\noindent The probability that Step 4 cannot be completed for some iteration is bounded above by the sum of the quantities in \eqref{Casei}, \eqref{Caseii} multiplied by $n$. Therefore with probability at least $1 - n(10^{-N_1}+10^{-N_2}) \geq 1/2$ Algorithm~\ref{boundedDegAlg} successfully outputs a grid-drawing of $G$.


\end{proof}

\section{Open problems}\label{concSec}

\begin{enumerate}[1.]\setlength{\itemsep}{5pt}
    \item May the dependence on $D$ in Theorem~\ref{blowupThm} be improved? Can the logarithmic term be removed? A positive answer to this second question would fully resolve the problem of Pach, Thiele, and T\'oth~\cite{PTT}. Dujmovi\'c, Morin, and Sheffer~\cite{DMS} estimate $\text{ncs}_d(N)$, i.e., the number graphs that can be drawn in a grid with volume $N$ in dimension $d \geq 4$. They pose the problem of estimating $\text{ncs}_3(N)$, attributed to D. R. Wood, and show that if $\text{ncs}_3(N) = 2^{o(N \log N)}$ then not every $n$-vertex graph with maximum degree 3 can be drawn in an $O(n)$ volume grid. 

    \item Another open problem of Pach, Thiele, and T\'oth~\cite{PTT} is the following. What is the smallest $m = m(n)$ such that $K_{n,n}$ can be drawn in $[m]^3$? It is known that $K_{n,n}$ can be drawn in a $O(\sqrt{n}) \times O(\sqrt{n}) \times O(n)$ grid, and that a volume of $\Omega(n^2)$ is necessary. No grid-drawings with an aspect ratio of less than $\sqrt{n}$ are known. 

    \item In~\cite{DWood04} the authors ask if every $n$-vertex graph with $k$ edges can be drawn in a grid with volume $O(kn)$. Theorem~\ref{blowupThm} answers this affirmatively for $D$-degenerate graphs with $D = O(n \log^{-1}n)$.

    



\end{enumerate}

\section*{Acknowledgements}

The authors thank Fabrizio Frati for introducing us to various problems on three-dimen\-sional graph drawing, and thank Martin Balko and Igor Araujo for helpful conversations. The authors are also grateful to Andrew Suk for organizing \emph{Workshop on Graph Drawing and Intersection Graphs} at UCSD from January 20--21, 2024.




\end{document}